\newtheorem{lemma}{Lemma}
\newtheorem{theorem}{Theorem}
\newtheorem{condition}{Condition}
\newtheorem{proposition}{Proposition}
\newtheorem{remark}{Remark}
\newtheorem{corollary}{Corollary}
\DeclareMathOperator{\alim}{as-lim}
\DeclareMathOperator{\rank}{rank}
\DeclareMathOperator{\diag}{diag}
\DeclareMathOperator{\blkdiag}{blkdiag}
\DeclareMathOperator{\trace}{trace}
\DeclareMathOperator{\range}{range}
\title{\LARGE \bf
Statistical Watermarking for Networked Control Systems
}
\author{Pedro Hespanhol, Matthew Porter, Ram Vasudevan, and Anil Aswani
\thanks{This work was supported by the UC Berkeley Center for Long-Term Cybersecurity, and by Ford Motor Company.}
\thanks{Pedro Hespanhol and Anil Aswani are with the Department of Industrial Engineering and Operations Research, University of California, Berkeley, CA 94720, USA 
        {\tt\small pedrohespanhol@berkeley.edu, aaswani@berkeley.edu}}%
				\thanks{Matthew Porter and Ram Vasudevan are with the Department of Mechanical Engineering, University of Michigan, Ann Arbor, MI 48109, USA 
        {\tt\small matthepo@umich.edu, ramv@umich.edu}}%
}
\begin{document}

\maketitle
\thispagestyle{empty}
\pagestyle{empty}

\begin{abstract}
Watermarking can detect sensor attacks in control systems by injecting a private signal into the control, whereby attacks are identified by checking the statistics of the sensor measurements and private signal.  However, past approaches assume full state measurements or a centralized controller, which is not found in networked LTI systems with subcontrollers.  Since generally the entire system is neither controllable nor observable by a single subcontroller, communication of sensor measurements is required to ensure closed-loop stability.  The possibility of attacking the communication channel has not been explicitly considered by previous watermarking schemes, and requires a new design.  In this paper, we derive a statistical watermarking test that can detect both sensor and communication attacks.  A unique (compared to the non-networked case) aspect of the implementing this test is the state-feedback controller must be designed so that the closed-loop system is controllable by each sub-controller, and we provide two approaches to design such a controller using Heymann's lemma and a multi-input generalization of Heymann's lemma.  The usefulness of our approach is demonstrated with a simulation of detecting attacks in a platoon of autonomous vehicles.  Our test allows each vehicle to independently detect attacks on both the communication channel between vehicles and on the sensor measurements.
\end{abstract}


\section{Introduction}

A major challenge in designing networked control systems is ensuring resilience of subcontrollers to malicious attacks \cite{cardenas2008research,abrams2008malicious,langner2011stuxnet,cardenas2008research}.  Unlike systems with a centralized controller where sensor attacks occur only on the output measurements, networked control systems are also susceptible to attacks on the communication channel used to transfer measurements between subcontrollers.  Though cryptography and cybersecurity \cite{parno2006secure,kumar2006managing,wang2013cyber,kim2012cyber} can secure communication channels, many networked control system applications feature rapid reconfiguration of the network and cannot operate in real-time with the overhead required to establish secure communication channels.  For instance, consider a platoon of autonomous vehicles where vehicles rapidly enter or exit the platoon.

This paper develops a statistical watermarking approach for detecting malicious sensor and communication attacks on networked LTI systems. Our first contribution is to design a watermarking test using null hypothesis testing \cite{mo2009secure,mo2010false,mo2014detecting,weerakkody2014detecting,mo2015physical}, and this requires characterizing the statistics of states and private watermarking signals under the dynamics of multiple subcontrollers within the networked system.  A unique feature (as compared to the non-networked setting) of the watermarking scheme is it requires the state-feedback control to be such that the closed-loop system is controllable by each subcontroller, because otherwise a subcontroller could not independently verify the lack of an attack.  Our second contribution is to provide two approaches to constructing such a state-feedback control, and this partly involves deriving a multi-input generalization of Heymann's lemma \cite{heymann1968comments,hautus1977simple}.


\subsection{Watermarking for LTI Systems}

Watermarking been proposed for detecting sensor attacks in LTI systems \cite{satchidanandan2016dynamic,ko2016theory,mo2009secure,mo2010false,mo2014detecting,weerakkody2014detecting,mo2015physical,hespanhol2017dynamic}.  The idea is that a random signal is added to the control, and the statistics of the sensor measurements and random signal are compared to check for sensor attacks.  Some schemes use null hypothesis testing \cite{mo2009secure,mo2010false,mo2014detecting,weerakkody2014detecting,mo2015physical}, while dynamic watermarking \cite{satchidanandan2016dynamic,ko2016theory,hespanhol2017dynamic} ensures only attacks adding a zero-average-power signal to the sensor measurements can remain undetected.  Most watermarking \cite{mo2009secure,mo2010false,mo2014detecting,weerakkody2014detecting,mo2015physical,hespanhol2017dynamic} is for LTI systems with a centralized controller, and only the approaches of \cite{satchidanandan2016dynamic,ko2016theory} can be used with networked LTI systems; however, the approaches \cite{satchidanandan2016dynamic,ko2016theory} require full state observation, which is not the case for many systems.  Our first contribution is to develop watermarking for networked LTI systems with partial state observation.

\subsection{Security for Intelligent Transportation Systems}

Intelligent transportation systems (ITS) \cite{ko2016theory,gonzalez2010perpetual,aswani2011,zhang2012hierarchical,vasudevan2012safe,mohan2016convex,como2016convexity} may benefit from watermarking.  For instance, \cite{ko2016theory} considered the use of dynamic watermarking to detect sensor attacks in a network of autonomous vehicles coordinated by a supervisory controller; the watermarking approach was successfully able to detect attacks.  However, large-scale deployments of ITS must be resilient in the face of partial state observations and partially distributed control structures.  For example, vehicle platoons are susceptible to malicious interference of GPS and the communication channel between vehicles \cite{SHEPARD2012146,Nighswander:2012:GSA:2382196.2382245,6837385}.  Our third contribution is to conduct a simulation that shows the efficacy of our watermarking scheme in detecting attacks on a vehicle platoon.

\subsection{Outline}
Sect. \ref{sec:ltisam} provides a model of the networked LTI system we consider, and specifies a model for communication and sensor attacks.  Next, Sect. \ref{sec:iwd} presents an example to give intuition about the new challenges with designing watermarking for networked systems.  We construct a statistical watermarking test in Sect. \ref{sec:dct}, which allows each subcontroller to independently check for the presence of communication or sensor attacks.  Our tests require a state-feedback controller such that the closed-loop system is controllable by each subcontroller, and Sect. \ref{sec:sfbk} provides two methods for constructing such a controller.  Last, Sect. \ref{sec:sav} conducts simulations of an autonomous vehicle platoon.  We show that our approach is able to detect the presence or absence of sensor attacks and attacks on the communication channel between vehicles.

\section{Networked LTI System and Attack Modalities}

\label{sec:ltisam}

\subsection{Dynamics and Measurement Model}

We study a setting with $\kappa$ subcontrollers.  The subscripts $i$ or j denote the $i$-th or $j$-th subcontroller, and the subscript $n$ indicates time.  Consider the LTI system with dynamics
\begin{equation}
x_{n+1} = Ax_n + \textstyle\sum_{i=1}^\kappa B_iu_{i,n} + w_n,
\end{equation}
where $x \in\mathbb{R}^p$ is the state, $u_i \in \mathbb{R}^{q_i}$ is the input of the $i$-th subcontroller, and $w\in\mathbb{R}^p$ is a zero mean i.i.d. process noise with a jointly Gaussian distribution and covariance $\Sigma_W$.  Each subcontroller steers a subset of the actuators, and each subcontroller makes the partial state observations
\begin{equation}
y_{i,n} = C_ix_n + z_{i,n} + v_{i,n},
\end{equation}
where $y_{i} \in \mathbb{R}^{m_i}$ is the observation of the $i$-th subcontroller, $z_{i}\in\mathbb{R}^{m_i}$ is zero mean i.i.d. measurement noise with a jointly Gaussian distribution and covariance $\Sigma_{Z,I}$, and $v_{i}\in\mathbb{R}^{m_i}$ should be interpreted as an additive measurement disturbance that is added by an attacker.  

\subsection{Network Communication Model}

The LTI system here is networked in the following sense:  The dynamics and partial observations are such that for 
\begin{align}
&B = \begin{bmatrix} B_1 & \cdots & B_\kappa\end{bmatrix}\\
&C^\textsf{T} = \begin{bmatrix} C_1^\textsf{T} & \cdots & C_\kappa^\textsf{T}\end{bmatrix}
\end{align}
we have that $(A,B)$ is stabilizable and $(A,C)$ is detectable.  In general, $(A,B_i)$ is not stabilizable for some (or all) $i$, and similarly $(A, C_i)$ is not detectable for some (or all) $i$.  Thus coordination is required between subcontrollers to ensure closed-loop stability, and networking arises because we assume each subcontroller communicates its own partial state observations to all other subcontrollers.  (Our setting assumes communication has zero cost.)  Consider the values
\begin{equation}
s_{i,j,n} = y_{j,n} + \nu_{i,j,n},
\end{equation}
where $s_{i,j}\in\mathbb{R}^{m_j}$ is the value communicated to subcontroller $i$ of the measurement made by subcontroller $j$, and $\nu_{i,j,n}\in\mathbb{R}^{m_j}$ should be interpreted as an additive communication disturbance added by an attacker.  Clearly $\nu_{i,i,n} \equiv 0$ for all $i$, since the $i$-th subcontroller already has its own measurement.

\subsection{Controller and Observer Structure}

The idea of statistical watermarking in this context will be to superimpose a private (and random) excitation signal $e_{i,n}$ known in value to only the $i$-th subcontroller but unknown in value to the attacker or to the other subcontrollers.  We will apply the control input $u_{i,n} = K_i\hat{x}_{i,n} + e_{i,n}$, where $\hat{x}_{i,n}$ is the observer-estimated state (the subscript $i$ here indicates that each subcontroller operates its own observer, and that $\hat{x}_{i,n}$ is the state estimated by the observer of the $i$-th subcontroller) and $e_{i,n}$ are i.i.d. Gaussian with zero mean and constant variance $\Sigma_{E,I}$ fixed by the subcontrollers.

Now let $K_i$ be constant state-feedback gain matrices such that $A+\sum_{i=1}^\kappa B_iK_i$ is Schur stable, and let $L_i$ be constant observer gain matrices.  It will be useful to define
\begin{align}
&K^\textsf{T} = \begin{bmatrix} K_1^\textsf{T} & \cdots & K_\kappa^\textsf{T}\end{bmatrix}\\
&L = \begin{bmatrix} L_1 & \cdots & L_\kappa\end{bmatrix}
\end{align}
Then the closed-loop system with private excitation is
\begin{equation}
\label{eqn:clpe}
\begin{aligned}
&x_{n+1} = Ax_n + \textstyle\sum_{j=1}^\kappa B_j(K_j\hat{x}_{j,n} + e_{j,n}) + w_n\\
&\hat{x}_{i,n+1} = \textstyle(A+\sum_{j=1}^\kappa B_jK_j + \sum_{j=1}^\kappa L_jC_j)\hat{x}_{i,n} +\\
&\hspace{2cm}\textstyle-\sum_{j=1}^\kappa L_jC_jx_n + B_ie_{i,n} +\\
&\hspace{3cm}\textstyle-\sum_{j=1}^\kappa L_j(z_{j,n}+v_{j,n}+\nu_{i,j,n}) 
\end{aligned}
\end{equation}
These equations represent the fact that each subcontroller has its own observer using the measurements that it has received.  It is not clear \textit{a priori} that this closed-loop system is stable since each observer may start at a different initial condition.  This concern is resolved by the following result:

\begin{proposition}
\label{prop:cllp}
Let $K_i$ and $L_i$ be constant state-feedback and observer gains such that $A+BK$, $A+LC$, and $A+BK+LC$ are Schur stable.  The closed-loop system (\ref{eqn:clpe}) is Schur stable with no private excitation $e_{j,n} \equiv 0$, process noise $w_n\equiv 0$, measurement noise $z_{j,n} \equiv 0$, measurement attack $v_{j,n}\equiv 0$, and communication attack $\nu_{i,j,n}\equiv 0$.
\end{proposition}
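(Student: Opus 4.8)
The plan is to pass to per-subcontroller estimation errors and then find a linear change of coordinates in which the noise-free, excitation-free closed loop (\ref{eqn:clpe}) has a block-triangular system matrix whose diagonal blocks are exactly $A+BK$, $A+LC$, and $A+BK+LC$. Since the spectrum of a block-triangular matrix is the union of the spectra of its diagonal blocks, and these three matrices are Schur stable by hypothesis, Schur stability of the closed loop follows at once.

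First I would define $\varepsilon_{i,n} := x_n - \hat{x}_{i,n}$ and substitute $\hat{x}_{j,n} = x_n - \varepsilon_{j,n}$ into (\ref{eqn:clpe}) after setting $e_{j,n}\equiv 0$, $w_n\equiv 0$, $z_{j,n}\equiv 0$, $v_{j,n}\equiv 0$, $\nu_{i,j,n}\equiv 0$. Using $\sum_j B_jK_j = BK$ and $\sum_j L_jC_j = LC$, the state recursion becomes $x_{n+1} = (A+BK)x_n - \sum_{j=1}^\kappa B_jK_j\varepsilon_{j,n}$, and a short computation collapses the observer equations into $\varepsilon_{i,n+1} = (A+BK+LC)\varepsilon_{i,n} - \sum_{j=1}^\kappa B_jK_j\varepsilon_{j,n}$ for each $i$.

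The crux is the observation that the cross-coupling term $\sum_{j} B_jK_j\varepsilon_{j,n}$ is identical in every error equation, so the disagreement variables $\delta_{i,n} := \varepsilon_{i,n} - \varepsilon_{1,n} = \hat{x}_{1,n} - \hat{x}_{i,n}$ (for $i = 2,\dots,\kappa$) obey the autonomous recursion $\delta_{i,n+1} = (A+BK+LC)\delta_{i,n}$. Writing $\varepsilon_{j,n} = \varepsilon_{1,n} + \delta_{j,n}$ with $\delta_{1,n}\equiv 0$ then reduces the $\varepsilon_1$-equation to $\varepsilon_{1,n+1} = (A+LC)\varepsilon_{1,n} - \sum_{j=2}^\kappa B_jK_j\delta_{j,n}$ and the state equation to $x_{n+1} = (A+BK)x_n - BK\varepsilon_{1,n} - \sum_{j=2}^\kappa B_jK_j\delta_{j,n}$. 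I would check that $(x_n,\hat{x}_{1,n},\dots,\hat{x}_{\kappa,n}) \mapsto (\delta_{2,n},\dots,\delta_{\kappa,n},\varepsilon_{1,n},x_n)$ is an invertible linear map (indeed $\hat{x}_{1,n} = x_n - \varepsilon_{1,n}$ and $\hat{x}_{i,n} = x_n - \varepsilon_{1,n} - \delta_{i,n}$), so it is a legitimate change of coordinates for the closed-loop system.

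Finally, in these coordinates the closed-loop matrix is block lower triangular: each $\delta_i$-block feeds only itself, the $\varepsilon_1$-block is driven only by the $\delta_i$'s and itself, and the $x$-block is driven by everything. Its diagonal blocks are $A+BK+LC$ (with multiplicity $\kappa-1$), $A+LC$, and $A+BK$, all Schur by assumption, so the closed-loop matrix is Schur and (\ref{eqn:clpe}) is Schur stable, proving the proposition. The only genuine obstacle is identifying the right coordinates — in particular recognizing that the observer "consensus error" $\delta_i$ decouples and is governed by $A+BK+LC$ — after which the argument is routine linear algebra.
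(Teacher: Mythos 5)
Your proposal is correct and follows essentially the same route as the paper: the paper's proof uses the change of variables $\delta_1 = \hat{x}_1 - x$ and $d_i = \hat{x}_i - \hat{x}_1$, which are just the negatives of your $\varepsilon_1$ and $\delta_i$, and arrives at the same block-triangular matrix with diagonal blocks $A+BK$, $A+LC$, and $A+BK+LC$. The key observation you flag — that the observer disagreement variables evolve autonomously under $A+BK+LC$ — is exactly the paper's argument.
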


\begin{proof}
Consider the change of variables from the states $x,\hat{x}_i$ to the states $x,\delta_1,d_i$ where $\delta_1 = \hat{x}_1 - x$ and $d_i = \hat{x}_i - \hat{x}_1$ for $i = 2,\ldots,\kappa$.  Then inserting this change of variables into (\ref{eqn:clpe}) gives
\begin{equation}
\label{eqn:dynamicschange2}
\begin{aligned}
&x_{n+1} = (A+\textstyle\sum_{j=1}^\kappa B_jK_j)x_n + (\sum_{j=1}^\kappa B_jK_j)\delta_{1,n} + \\
&\hspace{2cm}\textstyle\sum_{j=2}^\kappa B_jK_jd_{j,n}\\
&\delta_{1,n+1} = \textstyle(A+\sum_{j=1}^\kappa L_jC_j)\delta_{1,n} - \sum_{j=2}^\kappa B_jK_jd_{j,n}\\
&d_{i,n+1} = \textstyle(A+\sum_{j=1}^\kappa B_jK_j + \sum_{j=1}^\kappa L_jC_j)d_{i,n}, \\
&\hspace{5cm}\text{ for } i=2,\ldots,\kappa\\
\end{aligned}
\end{equation}
If we put $x,\delta_i$ into a single vector $\check{x}$, then the dynamics $\check{x}_{n+1} =\check{A}\check{x}_n$ are such that $\check{A}$ is a block upper-triangular matrix with $A+BK$, $A+LC$, and $A+BK+LC$ on the diagonal.  This means $\check{A}$ is Schur stable since we assumed $A+BK$, $A+LC$, and $A+BK+LC$ are Schur stable.
\end{proof}

\begin{remark}
This result implies that the separation principle does not hold.  Fortunately, this is not a substantial impediment from the standpoint of design.  Given a $K$ such that $A+BK$ is stable, we can solve an LMI formulation \cite{boyd1994linear,de1999new} to choose (when feasible) a $C$ such that both $A+LC$ and $A+BK+LC$ are Schur stable.  In particular, suppose there exists a positive definite matrix $Q\succ 0$ and general matrix $R$ such that the following two LMI's
\begin{equation}
\begin{aligned}
&\begin{bmatrix} Q & A^\textsf{T}Q+C^\textsf{T}R \\ Q^\textsf{T}A+R^\textsf{T}C & Q\end{bmatrix} \succ 0\\
&\begin{bmatrix} Q & (A+BK)^\textsf{T}Q+C^\textsf{T}R \\ Q^\textsf{T}(A+BK)+R^\textsf{T}C & Q\end{bmatrix} \succ 0
\end{aligned}
\end{equation}
are satisfied.  Then choosing $L = Q^{-1}R^\textsf{T}$ ensures that $A+LC$ and $A+BK+LC$ are Schur stable.  Convex optimization can be used to determine if these LMI's have a solution, and compute a solution if possible.
\end{remark}

For the purpose of designing our test, it will be useful to define another change of variables on the states.  Consider the change of variables from the states $x,\hat{x}_i$ to the states $x,\delta_i$ where $\delta_i = \hat{x}_i - x$ for $i = 2,\ldots,\kappa$.  If there is no measurement attack $v_{j,n}\equiv 0$ and no communication attack $\nu_{i,j,n}\equiv 0$, then a straightforward calculation gives
\begin{equation}
\label{eqn:dynamicschange}
\begin{aligned}
&x_{n+1} = (A+BK)x_n + \textstyle\sum_{j=1}^\kappa B_j(K_j\delta_{j,n} + e_{j,n}) + w_n\\
&\delta_{i,n+1} = \textstyle(A+BK+LC)\delta_{i,n} + B_ie_{i,n}-\sum_{j=1}^\kappa L_jz_{j,n}+\\
&\hspace{1cm}-\textstyle\sum_{j=1}^\kappa B_j(K_j\delta_{j,n} + e_{j,n}) -w_n 
\end{aligned}
\end{equation}
If we define $\Delta^\textsf{T} = \begin{bmatrix} \delta_1^\textsf{T} & \cdots & \delta_\kappa^\textsf{T}\end{bmatrix}$ and $E^\textsf{T} = \begin{bmatrix} e_1^\textsf{T} & \cdots & e_\kappa^\textsf{T}\end{bmatrix}$, then the above dynamics for the $\delta_i$ can be written as
\begin{multline}
\Delta_{n+1} = \underline{A}\Delta_n + \mathrm{blkdiag}(B_1,\ldots,B_\kappa)E_n+\\
\textstyle- \begin{bmatrix}1 & \cdots & 1\end{bmatrix}^\textsf{T}\otimes\left(-w_n + \sum_{j=1}^\kappa L_jz_{j,n} + B_je_{j,n}\right),
\end{multline}
where $\otimes$ is the Kronecker product, $\mathrm{blkdiag}(B_1,\ldots,B_\kappa)$ is the block diagonal matrix with $B_1,\ldots,B_\kappa$ on the diagonals, and $\underline{A}$ is the corresponding matrix defined to make the above equivalent to (\ref{eqn:dynamicschange}).  This will be used to define our test.

%
%
%
%
%
%

\section{Intuition for Watermarking Design}

\label{sec:iwd}
Watermarking for networked systems faces new challenges not encountered in the non-networked setting.  To illustrate the new difficulty, consider the networked LTI system with
\begin{equation}
A = \begin{bmatrix} 1 & 1 \\ 0 & 1\end{bmatrix}\  B_1 = \begin{bmatrix}1 \\ 0\end{bmatrix}\ B_2 = \begin{bmatrix} 0 \\ 1\end{bmatrix}\ 
\begin{aligned}
&C_1 = \begin{bmatrix} 1 & 0\end{bmatrix}\\
&C_2= \begin{bmatrix}0&1\end{bmatrix}
\end{aligned}
\end{equation}
In this example, $(A,B_1)$ is not stabilizable and $(A,C_2)$ is not detectable.  And so coordination is required between the subcontrollers to stabilize the system.  

For instance, the choice $K = -\frac{1}{2}\mathbb{I}$ makes $A+BK$ Schur stable, and implementing the corresponding output-feedback controller requires communication of partial observations between the two subcontrollers.  In this case, the design is such that the first subcontroller cannot inject any watermarking signal into the second state, while the second subcontroller cannot inject any watermarking signal into the first state.  This is problematic because this means each subcontroller cannot verify the accuracy of the communicated state information.

However, suppose we instead choose
\begin{equation}
K = -\frac{1}{2}\begin{bmatrix} 1 & 1 \\ 1 & 1\end{bmatrix}
\end{equation}
Then $A+BK$ is Schur stable.  More importantly, $(A+BK, B_1)$ and $(A+BK, B_2)$ are controllable with this $K$.  Thus each subcontroller can inject a private watermarking signal known only to the subcontroller, and such that this signal can be used to verify the accuracy of the communicated state information and of the partial observations.  

This example shows that designing watermarking differs in the networked and non-networked cases.  The networked case requires designing both the state-feedback controller and the corresponding tests to detect attacks; whereas watermarking in the non-networked case only requires designing the the corresponding tests to detect attacks \cite{mo2009secure,mo2010false,mo2014detecting,weerakkody2014detecting,mo2015physical,hespanhol2017dynamic}.

\section{Specification of Statistical Test}

\label{sec:dct}

Though watermarking for networked systems requires designing both the state-feedback controller and watermarking tests, we first focus on the latter.  We construct a statistical test using the framework of null hypothesis testing, after assuming the existence of a state-feedback controller satisfying: 
\begin{condition}
\label{cond:one}
Let $k_{i,j}' = \min\{k\geq 0\ |\ C_j(A+BK)^kB_i \neq 0\}$.  For each $i$ and $j$, there exists a $k_{i,j}' \leq p-1$.
\end{condition}

This condition is itself nontrivial because it may be that $C_j(A+BK)^kB_i \equiv 0$ for all $k \geq 0$.  Approaches to synthesize a state-feedback controller $K$ to ensure the above condition holds will be shown in the next section.  This property is important because it means the watermarking signal of the $i$-th subcontroller is seen in the $j$-th output when the system is controlled by perfect-information state-feedback.

\subsection{Variable Definitions}

Now before specifying the test, it is useful to define some variables.  Suppose we have $K$, $L$ such that $A+BK$, $A+LC$, and $A+BK+LC$ are Schur stable.  Let $\Sigma_{\Delta}$ be the positive semidefinite matrix that solves the Lyapunov equation
\begin{multline}
\Sigma_\Delta = \underline{A}\Sigma_\Delta\underline{A}^\textsf{T} + \\
\mathrm{blkdiag}(B_1,\ldots,B_\kappa)\Sigma_E\mathrm{blkdiag}(B_1,\ldots,B_\kappa)^\textsf{T} + \\
\textstyle- \begin{bmatrix}1 & \cdots & 1\\ \vdots & \vdots & \vdots \\ 1 & \cdots & 1\end{bmatrix}^\textsf{T}\otimes\Big(\Sigma_W + \sum_{j=1}^\kappa L_j\Sigma_{Z,J}(L_j)^\textsf{T} + \\
B_j\Sigma_{E,J}(B_j)^\textsf{T}\Big),
\end{multline}
where $\Sigma_E = \mathrm{blkdiag}(\Sigma_{E,1},\ldots,\Sigma_{E,\kappa})$. A solution exists because the above is a Lyapunov equation and since Proposition \ref{prop:cllp} ensured stability.  Note by construction $\Sigma_{\Delta} = \alim_N \frac{1}{N}\sum_{n=0}^{N-1} \Delta_{n}^{\vphantom{\textsf{T}}}\Delta_{n}^\textsf{T}$ when there is no attack (i.e., $v_{i,n}\equiv 0$ and $\nu_{i,j,n}\equiv 0$ for all $i,j,n$).  If we divide $\Sigma_\Delta\in\mathbb{R}^{\kappa p\times \kappa p}$ into sub-matrices with dimension $p\times p$, then define $D_I\in\mathbb{R}^{p\times p}$ to be the $i\times i$-th sub-matrix of $\Sigma_\Delta$.

Lastly, we consider the matrix dynamics
\begin{multline}
\mathbb{E}(\Delta_{n+1}e_{i,t}^\textsf{T}) = \underline{A}\mathbb{E}(\Delta_ne_{i,t}^\textsf{T})) + \\
f_i\otimes B_i\Sigma_{E,I}\cdot\mathbf{1}(t = n)+\\
\textstyle- \begin{bmatrix}1 & \cdots & 1\end{bmatrix}^\textsf{T}\otimes\left(B_i\Sigma_{E,I}\right)\cdot\mathbf{1}(t = n),
\end{multline}
where $\mathbf{1}(\cdot)$ is an indictor function, and the vector $f_i$ has a one in the $i$-th position and is zero otherwise.  This means \begin{multline}
\Sigma_{\Delta,I,k} := \mathbb{E}(\Delta_{n}e_{i,n-k-1}^\textsf{T}) = \underline{A}^kf_i\otimes B_i\Sigma_{E,I}+\\
\textstyle- \underline{A}^k\begin{bmatrix}1 & \cdots & 1\end{bmatrix}^\textsf{T}\otimes\left(B_i\Sigma_{E,I}\right).
\end{multline}
If we divide $\Sigma_{\Delta,I,k}\in\mathbb{R}^{\kappa p\times q_i}$ in sub-matrices of size $p\times q_i$, then let $Q_{I,J,k}\in\mathbb{R}^{p\times q_i}$ be the $j$-th sub-matrix of $\Sigma_{\Delta,I,k}$.

\subsection{Definition of Test}

\begin{algorithm}[t]
\caption{Compute State-Feedback $K$ for Proposition \ref{prop:mihl}}
\label{alg:mihl}
\begin{algorithmic}
\State $x_1 := \frac{b_1}{\|b_1\|}$
\ForAll{$k \in \{1,\ldots,p-1\}$}
\State $x_{k+1} := \lambda^k\frac{b_{k+1}}{\|b_{k+1}\|}$
\State $u_k := B^{-1}\big(x_{k+1}-Ax_k\big)$
\EndFor
\State $x_{p+1} := \lambda^p\frac{b_1}{\|b_1\|}$
\State $u_p := B^{-1}\big(x_{p+1}-Ax_p\big)$
\State $X := \begin{bmatrix} x_1 & \cdots & x_p\end{bmatrix}$
\State $U := \begin{bmatrix} u_1 & \ldots & u_p\end{bmatrix}$
\State $K := UX^{-1}$
\end{algorithmic}
\end{algorithm}

Our statistical watermarking test will involve the (second-order) statistical characterization of the vectors defined as
\begin{equation}
\psi_{n,i,j} = \begin{bmatrix} e_{i,n-k_{i,j}'-1} \\ C_j\hat{x}_{i,n} - s_{i,j,n}\end{bmatrix},
\end{equation}
and this characterization will be used to specify the distribution corresponding to the null hypothesis of no attack.  

\begin{theorem}
If we have that $v_{i,n}\equiv 0$ and $\nu_{i,j,n}\equiv 0$, then $\textstyle\alim_{N} \frac{1}{N}\sum_{n=0}^{N-1} \psi_n^{\vphantom{\textsf{T}}}\psi_n^\textsf{T} = R_{I,J}$, where
\begin{equation}
\label{eqn:acttestjoint}
R_{I,J} = \begin{bmatrix}\Sigma_{E,I} & Q_{I,J,k'_{i,j}}^\textsf{T}C_j^\textsf{T}\\ C_jQ_{I,J,k'_{i,j}} & C_jD_iC_j^\textsf{T}+\Sigma_{Z,J}\end{bmatrix}.
\end{equation}
Moreover, we have that $\alim_n \mathbb{E}(\psi_n) = 0$.
\end{theorem}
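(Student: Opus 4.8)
The plan is to reduce $\psi_{n}$, under the no-attack hypothesis, to a linear functional of the asymptotically stationary zero-mean Gaussian process generated by the closed loop, and then to read off the three blocks of the limiting second moment from the Lyapunov equation for $\Sigma_{\Delta}$ and from the matrix recursion for $\Sigma_{\Delta,I,k}$.

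First I would substitute $v_{j,n}\equiv 0$ and $\nu_{i,j,n}\equiv 0$, so that $s_{i,j,n}=y_{j,n}=C_{j}x_{n}+z_{j,n}$ and hence
\[
C_{j}\hat{x}_{i,n}-s_{i,j,n}=C_{j}(\hat{x}_{i,n}-x_{n})-z_{j,n}=C_{j}\delta_{i,n}-z_{j,n},
\]
the term $x_{n}$ cancelling; this gives $\psi_{n}=\begin{bmatrix}e_{i,n-k_{i,j}'-1}\\ C_{j}\delta_{i,n}-z_{j,n}\end{bmatrix}$, which is well defined for all large $n$ because Condition \ref{cond:one} makes $k_{i,j}'$ finite. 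By (\ref{eqn:dynamicschange}) the stacked error obeys $\Delta_{n+1}=\underline{A}\Delta_{n}+\zeta_{n}$, where $\zeta_{n}$ is a zero-mean i.i.d.\ Gaussian sequence assembled from $w_{n}$, $\{z_{j,n}\}_{j}$, and $\{e_{j,n}\}_{j}$, and where the $\delta$-dynamics do not involve $x_{n}$. As in the proof of Proposition \ref{prop:cllp}, the invertible coordinate change $\delta_{1}\mapsto\delta_{1}$, $\delta_{i}\mapsto\delta_{i}-\delta_{1}$ conjugates $\underline{A}$ to the block upper-triangular matrix with $A+LC$ and $A+BK+LC$ on its diagonal, so $\underline{A}$ is Schur stable. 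Consequently $\Delta_{n}$, jointly with the driving noises, converges to a zero-mean stationary Gaussian process, which is ergodic (being a measurable function of an i.i.d.\ sequence), and the contribution of the bounded observer initializations decays geometrically.

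Next I would invoke the strong law of large numbers for such stable linear-Gaussian processes to obtain that $\frac{1}{N}\sum_{n=0}^{N-1}\psi_{n}^{\vphantom{\textsf{T}}}\psi_{n}^{\textsf{T}}$ converges almost surely to the stationary second moment of $\psi$, the transient terms dropping out of the average. It then remains to identify the three blocks. The $(1,1)$ block is the covariance $\Sigma_{E,I}$ of $e_{i}$. The $(2,2)$ block is $C_{j}D_{i}C_{j}^{\textsf{T}}+\Sigma_{Z,J}$: the cross terms vanish because $\delta_{i,n}$ is a function only of noises indexed by times $\le n-1$ and is therefore independent of $z_{j,n}$, while $\alim_{N}\frac{1}{N}\sum_{n=0}^{N-1}\delta_{i,n}^{\vphantom{\textsf{T}}}\delta_{i,n}^{\textsf{T}}$ is the $i\times i$ sub-matrix $D_{i}$ of $\Sigma_{\Delta}=\alim_{N}\frac{1}{N}\sum_{n=0}^{N-1}\Delta_{n}^{\vphantom{\textsf{T}}}\Delta_{n}^{\textsf{T}}$. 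By independence of $e_{i}$ and $z_{j}$, the $(2,1)$ block equals $C_{j}\cdot\alim_{N}\frac{1}{N}\sum_{n=0}^{N-1}\delta_{i,n}^{\vphantom{\textsf{T}}}e_{i,n-k_{i,j}'-1}^{\textsf{T}}$, and the stationary form of the recursion for $\mathbb{E}(\Delta_{n}e_{i,n-k-1}^{\textsf{T}})$ identifies this cross-covariance with the relevant $p\times q_{i}$ sub-matrix of $\Sigma_{\Delta,I,k_{i,j}'}$, namely $Q_{I,J,k_{i,j}'}$; the $(1,2)$ block is its transpose. Assembling the blocks yields $R_{I,J}$ as in (\ref{eqn:acttestjoint}). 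For the mean, $\mathbb{E}(\psi_{n})=\begin{bmatrix}0\\ C_{j}\mathbb{E}(\delta_{i,n})\end{bmatrix}$ and $\mathbb{E}(\Delta_{n})=\underline{A}^{n}\mathbb{E}(\Delta_{0})\to 0$ since $\underline{A}$ is Schur stable, so $\alim_{n}\mathbb{E}(\psi_{n})=0$.

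The step I expect to be the main obstacle is the almost-sure convergence claim: one must justify the ergodic theorem in the presence of the non-stationary transient (each subcontroller's observer may be initialized differently) and verify that the finitely many boundary terms with $n<k_{i,j}'+1$ do not affect the limit; this is routine bookkeeping around a standard strong law for stable rational Gaussian processes rather than a genuinely new difficulty. The remaining effort is the index-careful matching of the Kronecker-structured covariances to the previously defined quantities $\Sigma_{\Delta}$, $D_{i}$, and $\Sigma_{\Delta,I,k_{i,j}'}$.
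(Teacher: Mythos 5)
Your proposal is correct and follows essentially the same route as the paper's proof: reduce $C_j\hat{x}_{i,n}-s_{i,j,n}$ to $C_j\delta_{i,n}-z_{j,n}$ under the no-attack hypothesis, use independence of $z_{j,n}$ from $\delta_{i,n}$ and of $e_{i,n-k_{i,j}'-1}$ from $z_{j,n}$ to identify the blocks with $\Sigma_{E,I}$, $C_jQ_{I,J,k_{i,j}'}$, and $C_jD_iC_j^{\textsf{T}}+\Sigma_{Z,J}$, and get the mean from Schur stability. You are in fact more careful than the paper on one point — the paper computes limiting expectations and asserts the almost-sure limit of the time averages without comment, whereas you explicitly invoke ergodicity of the stable linear-Gaussian process to pass from second moments to $\alim$ — but this fills a gap rather than changing the argument.
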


\begin{proof}
First note that we have $\alim_n \mathbb{E}(\delta_{i,n}) = 0$ by the stability from Proposition \ref{prop:cllp}.  But $C_j\hat{x}_{i,n} - s_{i,j,n} = C_j\delta_{i,n} - z_{j,n}$, and so $\mathbb{E}(C_j\hat{x}_{i,n} - s_{i,j,n}) = C_j\mathbb{E}(\delta_{i,n})$.  This implies $\alim_n \mathbb{E}(C_j\hat{x}_{i,n} - s_{i,j,n}) = 0$, which proves $\alim_n \mathbb{E}(\psi_n) = 0$ since the $e_i$ have zero mean.

Next observe the upper block triangle of (\ref{eqn:acttestjoint}) is correct by construction of $Q_{I,J,k'_{i,j}}$ and by definition of the $e_i$, and so we only have to prove that the lower-right block is correct.  In particular, note that $\mathbb{E}((C_j\delta_{i,n}-z_n)^{\vphantom{\textsf{T}}}(C_j\delta_{i,n}-z_n)^{\textsf{T}}) = \mathbb{E}((C_j\delta_{i,n})^{\vphantom{\textsf{T}}}(C_j\delta_{i,n})^{\textsf{T}}) + \Sigma_{Z,I}$ since $z_{i,n}$ is independent of $\delta_{i,n}$ by (\ref{eqn:dynamicschange}).  This implies that we have that $\alim_n \mathbb{E}((C_j\delta_{i,n}-z_n)^{\vphantom{\textsf{T}}}(C_j\delta_{i,n}-z_n)^{\textsf{T}})  = C_jD_iC_j^\textsf{T}+\Sigma_{Z,I}$.
\end{proof}

This result means that asymptotically the summation $S_{i,j} = \frac{1}{\ell}\sum_{n+1}^{n+\ell}\psi_{n,i,j}^{\vphantom{\textsf{T}}}\psi_{n,i,j}^\textsf{T}$ with $\ell \geq (m_i+q_i)$ has a Wishart distribution with $\ell$ degrees of freedom and a scale matrix that matches (\ref{eqn:acttestjoint}), and we use this to define a statistical test.  In particular, we check if the negative log-likelihood 
\begin{multline}
\label{eqn:nlltest}
\textstyle\mathcal{L} = \sum_{j=1}^\kappa (1-\ell+m_i+q_i)\cdot\log\det S_{n,i,j} + \\
\textstyle\sum_{j=1}^\kappa \trace\left(R_{I,J}^{-1}\cdot S_{n,i,j}\right)
\end{multline}
corresponding to this Wishart distribution and the summations of $S_{n,i,j}$ is large by conducting the hypothesis test
\begin{equation}
\begin{cases}
\text{reject, } &\text{if } \mathcal{L}(S_n) > \tau(\alpha)\\
\text{accept, } &\text{if } \mathcal{L}(S_n) \leq \tau(\alpha)
\end{cases}
\end{equation}
where $\tau(\alpha)$ is a threshold that controls the false error rate $\alpha$.  A rejection corresponds to the detection of an attack, while an acceptance corresponds to the lack of detection of an attack.  This notation emphasizes the fact that achieving a specified false error rate $\alpha$ (a false error in our context corresponds to detecting an attack when there is no attack occurring) requires changing the threshold $\tau(\alpha)$.

\section{Designing the State-Feedback}

\begin{algorithm}[t]
\caption{Compute State-Feedback $K$ for Proposition \ref{prop:nii}}
\label{alg:nii}
\begin{algorithmic}
\State choose any $v$ such that $Bv \in \cap_{i=1}^\kappa \range(B_i)$
\State compute $K'$ satisfying Heymann's lemma for $Bv$
\State compute $G$ such that $A+BK'+BvG$ is Schur stable
\State $K := K' + vG$
\end{algorithmic}
\end{algorithm}

\label{sec:sfbk}

We provide two approaches for designing a state-feedback controller that satisfies Condition \ref{cond:one}.  The first applies when $B$ is square (i.e., $\sum_{i=1}^k q_i = p$); though it generalizes to skinny $B$ (i.e., $\sum_{i=1}^k q_i < p$) in some cases, we do not prove this.  The first approach relies upon a multi-input generalization (which we construct and prove) of Heymann's lemma \cite{heymann1968comments,hautus1977simple}.  The second approach applies to $B$ of arbitrary size where the range spaces of $B_i$ have a nonempty intersection.

\subsection{Multiple Input Heymann's Lemma}

Heymann's lemma \cite{heymann1968comments,hautus1977simple} is used to prove arbitrary pole placement of controllable, multiple input LTI systems by allowing a reduction to the case of arbitrary pole placement of a controllable, single input LTI system.  Formally, it says

\begin{lemma}[Heymann's Lemma]
If $(A,B)$ is controllable, then for any $b = Bv \neq 0$ there exists $K$ (that depends on $b$) such that $(A+BK, b)$ is controllable.
\end{lemma}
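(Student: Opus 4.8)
The plan is to construct $K$ directly so that $b,(A+BK)b,\ldots,(A+BK)^{p-1}b$ is a basis of $\mathbb{R}^p$, which is precisely controllability of $(A+BK,b)$. First I would build a sequence of vectors $x_1 := b$ and, inductively, $x_{k+1} := Ax_k + Bu_k$ for suitably chosen inputs $u_k$, arranged so that $x_1,\ldots,x_p$ are linearly independent. Granting such a sequence, set $X := \begin{bmatrix} x_1 & \cdots & x_p\end{bmatrix}$ (invertible), $U := \begin{bmatrix} u_1 & \cdots & u_{p-1} & 0\end{bmatrix}$, and $K := UX^{-1}$; then $BKx_k = Bu_k = x_{k+1} - Ax_k$, so $(A+BK)x_k = x_{k+1}$ for $k<p$, hence $x_{k+1} = (A+BK)^k b$, and the $x_k$ span $\mathbb{R}^p$, giving the claim.

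The heart of the argument is the inductive step. Given linearly independent $x_1,\ldots,x_k$ with $k<p$ and $x_{i+1}-Ax_i \in \range(B)$ for $i<k$, I must produce $u_k$ with $x_{k+1} := Ax_k + Bu_k \notin V_k := \mathrm{span}\{x_1,\ldots,x_k\}$; equivalently, the affine set $Ax_k + \range(B)$ must not lie inside $V_k$. Suppose it did. Taking the zero input shows $Ax_k \in V_k$, and subtracting then forces $\range(B)\subseteq V_k$. Combined with $x_{i+1}\in V_k$ and $x_{i+1}-Ax_i \in \range(B)\subseteq V_k$, this yields $Ax_i \in V_k$ for every $i\leq k$, so $V_k$ is $A$-invariant and contains $\range(B)$. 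Then $V_k$ contains the reachable subspace $\range\begin{bmatrix} B & AB & \cdots & A^{p-1}B\end{bmatrix} = \mathbb{R}^p$ by controllability, contradicting $\dim V_k = k < p$. So a valid $u_k$ exists, and since $x_1 = b \neq 0$ the induction runs from $k=1$ up to $k=p$.

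I expect the only delicate point to be this invariance step — in particular, checking that containment of the whole affine set $Ax_k + \range(B)$ in $V_k$ propagates backwards to put all of $Ax_1,\ldots,Ax_k$ into $V_k$, so that controllability can be invoked; the remainder is bookkeeping about change of basis. It is worth stating explicitly that $K$ is pinned down only on the basis $x_1,\ldots,x_p$ (the value $Kx_p$, taken to be $0$ above, is free), and that the successive choices of $u_k$ within $(Ax_k + \range(B))\setminus V_k$, and hence the resulting $K$, depend on the starting direction $b = Bv$; this dependence is exactly the ``$K$ depends on $b$'' clause of the statement, and it is the reason a constructive scheme must commit to a concrete rule for selecting the $u_k$ rather than producing a universal $K$.
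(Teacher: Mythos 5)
Your proof is correct. Note, however, that the paper does not actually prove Heymann's lemma --- it states it as a known result and cites \cite{heymann1968comments,hautus1977simple} --- so there is no in-paper argument to compare against. What you have written is essentially the classical Hautus proof from the second of those references: build a trajectory $x_1=b$, $x_{k+1}=Ax_k+Bu_k$ whose first $p$ points are linearly independent, rule out getting stuck by observing that otherwise $V_k=\mathrm{span}\{x_1,\ldots,x_k\}$ would be $A$-invariant and contain $\range(B)$, hence contain the full reachable subspace, contradicting controllability; then read off $K$ from $K=UX^{-1}$ so that $(A+BK)^{k-1}b=x_k$. All the steps check out, including the backward propagation $Ax_i=x_{i+1}-Bu_i\in V_k$ that establishes invariance. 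It is worth pointing out that the paper's proof of its multi-input generalization (Proposition~\ref{prop:mihl}) uses exactly the same interpolation device $K=UX^{-1}$ applied to a constructed trajectory, but there $B$ is assumed square and invertible, so each $u_k$ exists trivially and the invariant-subspace contradiction --- the one genuinely nontrivial step in your argument, which you handle correctly --- is not needed. Conversely, your argument applies to an arbitrary controllable pair $(A,B)$, but it only delivers controllability of $(A+BK,b)$ and says nothing about the spectrum of $A+BK$; the paper's $\lambda$-scaled variant of the construction additionally forces Schur stability, which is why Algorithm~\ref{alg:nii} must follow the Heymann step with a separate stabilizing feedback $G$.
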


We need a multiple input generalization of Heymann's Lemma.  Let $b_i$ denote the $i$-th column of the matrix $B$.  Then

\begin{proposition}
\label{prop:mihl}
If $B$ is full rank and square-shaped (i.e., $B\in\mathbb{R}^{p\times p}$); then there exists a single $K$ such that $A+BK$ is Schur stable and $(A+BK, b_i)$ is controllable for all $i$.
\end{proposition}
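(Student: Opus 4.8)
The plan is to show that the gain $K$ produced by Algorithm~\ref{alg:mihl} has the two desired properties, for any fixed scalar $\lambda$ with $0 < |\lambda| < 1$ (taking $\lambda$ real, e.g. $\lambda = 1/2$, so that $K$ stays real). First I would check that the algorithm is well posed: since $B \in \mathbb{R}^{p\times p}$ is full rank it is invertible, so each $u_k = B^{-1}(x_{k+1}-Ax_k)$ is defined, and the matrix $X = [\,x_1\ \cdots\ x_p\,]$ equals $BD$ with $D = \diag\!\big(\|b_1\|^{-1},\ \lambda\|b_2\|^{-1},\ \ldots,\ \lambda^{p-1}\|b_p\|^{-1}\big)$, which is invertible because $\lambda\neq 0$ and each $b_i\neq 0$; hence $X$ is invertible and $K = UX^{-1}$ is well defined. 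I would also record the identity $x_i = \lambda^{i-1} b_i/\|b_i\|$ for $i=1,\ldots,p$, so that each $b_i$ is a nonzero scalar multiple of $x_i$.

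Next I would establish that $A+BK$ acts on the basis $\{x_1,\ldots,x_p\}$ as a scaled cyclic shift. By construction $Kx_k = u_k$ (this is exactly what $K = UX^{-1}$ encodes), so $(A+BK)x_k = Ax_k + Bu_k = Ax_k + (x_{k+1}-Ax_k) = x_{k+1}$ for $k=1,\ldots,p-1$, and $(A+BK)x_p = x_{p+1} = \lambda^p x_1$. Thus $(A+BK)X = XM$, where $M$ is the companion matrix of $t^p-\lambda^p$, so $A+BK$ is similar to $M$ and its eigenvalues are the scalars $\lambda e^{2\pi\mathrm{i}k/p}$, $k=0,\ldots,p-1$, all of modulus $|\lambda|<1$. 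Hence $A+BK$ is Schur stable.

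For controllability of $(A+BK,b_i)$, the key observation is that iterating the shift relation gives, for $0\le m\le p-1$, $(A+BK)^m x_i = \lambda^{pw}x_r$, where $r\in\{1,\ldots,p\}$ satisfies $r\equiv i+m \pmod p$ and $w\in\{0,1\}$ counts wrap-arounds (at most one, since $i+m\le 2p-1$). Because $b_i$ is a nonzero multiple of $x_i$, it follows that $(A+BK)^m b_i$ is a nonzero scalar multiple of $x_r$, and as $m$ runs over $0,\ldots,p-1$ the index $r$ runs over all of $\{1,\ldots,p\}$. Therefore the controllability matrix $[\,b_i\ (A+BK)b_i\ \cdots\ (A+BK)^{p-1}b_i\,]$ equals $X$ times a permutation matrix times an invertible diagonal matrix, so it has $\rank = p$; that is, $(A+BK,b_i)$ is controllable. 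Since the same $K$ works for every $i$, the proposition follows.

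There is no real conceptual obstacle here; the work is the bookkeeping of the exact scalars and the modular index arithmetic in the controllability step. The one place to stay vigilant is the standing requirement $\lambda\neq 0$, on which both the invertibility of $X$ (hence the well-definedness of $K$) and the nonvanishing of the scalars multiplying the $x_r$ depend.
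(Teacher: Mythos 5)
Your proposal is correct and follows essentially the same construction as the paper: the inductive definition of the $x_k$, the identity $(A+BK)x_k = x_{k+1}$ with wrap-around $\lambda^p x_1$, and the observation that the controllability matrix of $(A+BK,b_i)$ is a scaled cyclic permutation of the columns of $X$ (the paper's equation for the controllability matrix is exactly this). The only cosmetic difference is that you conclude Schur stability via similarity to the companion matrix of $t^p-\lambda^p$, while the paper uses $(A+BK)^p = \lambda^p\mathbb{I}$ together with the spectral mapping theorem; these are the same underlying fact.
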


\begin{proof}
We prove this result stepwise.  Since $B$ is full rank and square-shaped, its columns are linearly independent.  Consider any $\lambda$ with $0 < |\lambda|< 1$, and define $x_1 = \frac{b_1}{\|b_1\|}$ and $x_{n+1} = Ax_n + Bu_n$.  Now suppose there exists $u_1,\ldots,u_{k-1}$ such that $x_i = \lambda^{i-1}\frac{b_i}{\|b_i\|}$ for $i = 1,\ldots,k$.  If $k < p$, then there exists a $u_k$ satisfying $\lambda^k\frac{b_{k+1}}{\|b_{k+1}\|} - \lambda^{k-1}A\frac{b_k}{\|b_k\|} = Bu_k$ since $B$ is full rank.  Hence by definition of the dynamics on $x$ there exists $u_k$ such that $x_{k+1} = \lambda^k\frac{b_{k+1}}{\|b_{k+1}\|}$.  If $k = p$, then there exists a $u_p$ satisfying $\lambda^p\frac{b_{1}}{\|b_1\|} - \lambda^{p-1}A\frac{b_p}{\|b_p\|} = Bu_p$ since $B$ is full rank.  So by definition of the dynamics on $x$ there exists $u_p$ such that $x_{p+1} = \lambda^p\frac{b_1}{\|b_1\|}$.  

Next define the matrices
\begin{align}
&U = \begin{bmatrix} u_1 & \cdots & u_p \end{bmatrix}\\
&R = \begin{bmatrix} \frac{b_1}{\|b_1\|} & \cdots & \frac{b_p}{\|b_p\|} \end{bmatrix}\\
&\Lambda = \diag\big(1, \lambda, \ldots,\lambda^{p-1}\big)
\end{align}
and $K = UR^{-1}\Lambda^{-1}$.  The matrices $\Lambda$ and $R$ are invertible by construction since $0 < |\lambda|< 1$ and $B$ is invertible.  Note that by definition $U = K\Lambda B$.  Finally, note for any $i$ we have
\begin{multline}
\label{eqn:25}
\|b_i\|\cdot\begin{bmatrix} \frac{b_i}{\|b_i\|} & \cdots & \frac{b_p}{\|b_p\|} & \frac{b_1}{\|b_1\|} & \cdots & \frac{b_{i-1}}{\|b_{i-1}\|}\end{bmatrix}\cdot\Lambda = \\
\begin{bmatrix} b_i & (A+BK)b_i & \cdots & (A+BK)^{p-1}b_i\end{bmatrix}.
\end{multline}
The left side has full rank by the assumptions on $B$, and the right side is the observability matrix for the $(A+BK, b_i)$.  This proves $(A+BK, b_i)$ is observable for all $i$ since we have shown that the observability matrix has full rank.

We conclude by proving that the above designed $K$ makes $A+BK$ Schur stable.  Consider any $x\in\mathbb{R}^p$, and observe that by the assumptions on $B$ there exists $z\in\mathbb{R}^p$ such that $x = Rz$.  But, as in (\ref{eqn:25}), by construction $(A+BK)^p\frac{b_i}{\|b_i\|} = \lambda^p\frac{b_i}{\|b_i\|}$ for all $i$.  Hence $(A+BK)^px = (A+BK)^p Rz = \lambda^p Rz = \lambda^p x$ for all $x$.  This means all the eigenvalues of $(A+BK)^p$ are $\lambda^p$, and using the spectral mapping theorem implies the eigenvalues of $A+BK$ are roots of $\lambda^p$.  Thus the magnitude of the eigenvalues of $A+BK$ are $|\lambda|$, which means that $A+BK$ is Schur stable since $0 < |\lambda| < 1$.
\end{proof}

Though the above is an existence result, a state-feedback matrix $K$ satisfying Proposition \ref{prop:mihl} can be computed using Algorithm \ref{alg:mihl}.  The correctness of this algorithm follows from the construction used in the proof of Proposition \ref{prop:mihl}.  Also, the next result proves that this $K$ satisfies Condition \ref{cond:one}.
%


\begin{corollary}
Suppose $C_j \neq 0$ for all $j$.  If $B$ is full rank and square-shaped (i.e., $B\in\mathbb{R}^{p\times p}$); then there exists a $K$ such that $A+BK$ is Schur stable and that Condition \ref{cond:one} holds.
\end{corollary}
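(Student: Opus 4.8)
The plan is to take the state-feedback gain $K$ furnished by Proposition~\ref{prop:mihl}. By that proposition $A+BK$ is Schur stable, which settles the first requirement, so the only thing left is to verify Condition~\ref{cond:one}: that for every pair $(i,j)$ there is some $k \le p-1$ with $C_j(A+BK)^k B_i \neq 0$.

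First I would reduce from the block $B_i$ to a single column. Since $B$ is square and full rank, every block $B_i$ is nonzero and contains at least one column $b_\ell$ of $B$, and $C_j(A+BK)^k b_\ell$ appears as one of the columns of the matrix $C_j(A+BK)^k B_i$; hence $C_j(A+BK)^k B_i \neq 0$ whenever $C_j(A+BK)^k b_\ell \neq 0$. So it suffices to exhibit, for each column $b_\ell$ and each $j$, an exponent $k \le p-1$ with $C_j(A+BK)^k b_\ell \neq 0$.

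The key step is to reuse the full-rank matrix produced inside the proof of Proposition~\ref{prop:mihl}: equation~(\ref{eqn:25}) shows that $\big[\,b_\ell\ \ (A+BK)b_\ell\ \cdots\ (A+BK)^{p-1}b_\ell\,\big]$ is invertible, so the vectors $b_\ell,(A+BK)b_\ell,\ldots,(A+BK)^{p-1}b_\ell$ span $\mathbb{R}^p$. Suppose, for contradiction, that $C_j(A+BK)^k b_\ell = 0$ for all $k = 0,\ldots,p-1$. Then $C_j$ vanishes on a spanning set of $\mathbb{R}^p$, forcing $C_j = 0$ and contradicting the hypothesis $C_j \neq 0$. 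Therefore the set $\{\,k \ge 0 : C_j(A+BK)^k B_i \neq 0\,\}$ is nonempty and meets $\{0,\ldots,p-1\}$, so its minimum $k'_{i,j}$ exists and satisfies $k'_{i,j} \le p-1$, which is exactly Condition~\ref{cond:one}.

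I do not expect a serious obstacle here: the content is essentially immediate once Proposition~\ref{prop:mihl} is in hand. The only things to be careful about are the bookkeeping between the scalar column index $\ell$ and the subcontroller block index $i$, and the elementary observation that a single nonzero column of $C_j(A+BK)^k B_i$ already makes the whole matrix nonzero; the rest is a dimension count against $C_j \neq 0$.
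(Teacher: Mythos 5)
Your proposal is correct and follows essentially the same route as the paper: both rest on Proposition~\ref{prop:mihl} giving controllability of $(A+BK,b_s)$ for a column $b_s$ of $B_i$, and then conclude from $C_j\neq 0$ that $C_j$ cannot annihilate the full-rank Krylov matrix. The only cosmetic difference is that you argue directly that $C_j$ cannot vanish on a spanning set, whereas the paper invokes Sylvester's rank inequality on the block controllability matrix --- the two steps are interchangeable.
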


\begin{proof}
Consider any $i \in \{1,\ldots,\kappa\}$, and choose $s$ to be any index such that the $s$-th column in $B$ belongs to $B_i$.  Proposition \ref{prop:mihl} says $(A+BK, b_s)$ is controllable.  This means the controllability matrix
\begin{equation}
\mathfrak{C}' = \begin{bmatrix} b_s & (A+BK)b_s & \ldots & (A+BK)^{p-1}b_s\end{bmatrix}
\end{equation}
has $\rank(\mathfrak{C}') = p$, and so the controllability matrix
\begin{equation}
\mathfrak{C} = \begin{bmatrix} B_i & (A+BK)B_i & \ldots & (A+BK)^{p-1}B_i\end{bmatrix}
\end{equation}
also has $\rank(\mathfrak{C}) = p$ since the columns of $\mathfrak{C}$ are a superset of the columns of $\mathfrak{C}'$.  Thus by Sylvester's rank inequality, we have $\rank(C_j\mathfrak{C}) \geq \rank(C_j) + \rank(\mathfrak{C}) - p = \rank(C)$. But $\rank(C_j) \geq 1$ since $C_j\neq 0$.  Combining this with the earlier inequality gives $\rank(C_j\mathfrak{C}) \geq 1$, and so $C_j\mathfrak{C} \neq 0$.  This means $k_{i,j}' \leq p-1$ exists since $C_j\mathfrak{C}$ is a block matrix consisting of the blocks $C_j(A+BK)^kB_i$.
\end{proof}

\subsection{Nonempty Intersection of Inputs}

We next consider $B$ with arbitrary shape, such that the range spaces of $B_i$ have a nonempty intersection.  Our Algorithm \ref{alg:nii} designs a $K$ for this case, and it uses Heymann's lemma \cite{heymann1968comments,hautus1977simple}.  The next result proves its correctness.

\begin{proposition}
\label{prop:nii}
Suppose $C_j \neq 0$ for all $j$ and that we have $\cap_{i=1}^\kappa \range(B_i) \neq \emptyset$.  If $(A,B)$ is controllable, then Algorithm \ref{alg:nii} computes a $K$ such that $A+BK$ is Schur stable and that Condition \ref{cond:one} is satisfied.
\end{proposition}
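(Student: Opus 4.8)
The plan is to verify that each of the four lines of Algorithm \ref{alg:nii} is well-defined, and then to check that the resulting $K$ makes $A+BK$ Schur stable and satisfies Condition \ref{cond:one}. I would begin with the first line: since $\cap_{i=1}^\kappa \range(B_i) \neq \emptyset$ (interpreted as containing a nonzero vector, as is implicitly needed), there is a nonzero vector $b$ lying in every $\range(B_i)$, and since $b \in \range(B)$ we may write $b = Bv$ for some $v$. The key observation, which I would record here, is that $b = Bv$ lies in $\range(B_i)$ for \emph{every} $i$ simultaneously. Next, because $(A,B)$ is controllable and $b = Bv \neq 0$, Heymann's Lemma applies and yields a gain $K'$ such that $(A+BK', b)$ is controllable; this justifies the second line. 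For the third line, since $(A+BK', b)$ is a controllable single-input pair, by standard pole placement (e.g.\ Ackermann's formula) there is a row vector $G$ such that $A + BK' + bG = A + BK' + BvG$ has any desired Schur-stable spectrum; this justifies the third line and shows $A + BK = A + B(K' + vG) = A + BK' + BvG$ is Schur stable, which is the first claim.

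It remains to verify Condition \ref{cond:one}: for each $i,j$ there is $k'_{i,j} \leq p-1$ with $C_j(A+BK)^{k}B_i \neq 0$ for $k = k'_{i,j}$. I would argue as in the preceding Corollary. The controllability of $(A+BK, b)$ — which persists because $A+BK = A+BK'+bG$ differs from $A+BK'$ by the rank-one term $bG$, and adding $bG$ does not destroy reachability along $b$ (indeed $\range[b, (A+BK)b, \ldots, (A+BK)^{p-1}b]$ still equals $\mathbb{R}^p$ by the standard argument that $(A+BK'+bG, b)$ and $(A+BK', b)$ share the same controllability subspace) — means the matrix $\mathfrak{C}' = [\,b \ (A+BK)b \ \cdots \ (A+BK)^{p-1}b\,]$ has rank $p$. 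Since $b \in \range(B_i)$, we have $b = B_i v_i$ for some $v_i$, so every column of $\mathfrak{C}'$ is a linear combination of the columns of $\mathfrak{C} = [\,B_i \ (A+BK)B_i \ \cdots \ (A+BK)^{p-1}B_i\,]$; hence $\rank(\mathfrak{C}) = p$. Now Sylvester's rank inequality gives $\rank(C_j \mathfrak{C}) \geq \rank(C_j) + \rank(\mathfrak{C}) - p = \rank(C_j) \geq 1$ because $C_j \neq 0$. Therefore $C_j \mathfrak{C} \neq 0$; since $C_j \mathfrak{C}$ is the block row $[\,C_j B_i \ C_j(A+BK)B_i \ \cdots \ C_j(A+BK)^{p-1}B_i\,]$, at least one block $C_j(A+BK)^k B_i$ is nonzero with $k \leq p-1$, so $k'_{i,j}$ exists and is at most $p-1$.

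The main obstacle — really the only subtle point — is justifying that controllability of $(A+BK', b)$ is inherited by $(A+BK'+bG, b)$; this is the classical fact that state feedback along the input direction $b$ does not change the reachable subspace of that single-input pair, and I would either cite it or include the short induction showing $\mathrm{span}\{b, (A+BK'+bG)b, \ldots\} = \mathrm{span}\{b, (A+BK')b, \ldots\}$. Everything else is a direct reuse of Heymann's Lemma, single-input pole placement, and the Sylvester-inequality argument already deployed in the Corollary. I would also note at the start the harmless abuse in "$\cap_{i=1}^\kappa \range(B_i) \neq \emptyset$": since $0$ always lies in the intersection, the hypothesis should be read as the intersection containing a nonzero vector, and the first line of Algorithm \ref{alg:nii} should be understood as choosing such a $v$ with $Bv \neq 0$.
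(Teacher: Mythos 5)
Your proposal is correct and follows essentially the same route as the paper's own proof: Heymann's lemma for $K'$, single-input pole placement for $G$, preservation of controllability of the pair $(\,\cdot\,, Bv)$ under the added state feedback, the factorization of $\mathfrak{C}'$ through $\mathfrak{C}$ via $B_i v_i = Bv$, and the Sylvester rank inequality exactly as in the preceding corollary. Your added remarks --- reading the hypothesis as the intersection containing a \emph{nonzero} vector, and spelling out why feedback along $Bv$ preserves controllability --- are sound clarifications of points the paper leaves implicit.
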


\begin{proof}
First note that $v$ exists by assumption, and so we can compute $K'$ by Heymann's lemma.  This means $(A+BK', Bv)$ is controllable, which implies that we can compute $G$ such that $A+BK'+BvG$ is Schur stable.  This proves that $A+BK$ is Schur stable since $K = K' + vG$.  Next consider any $i\in\{1,\ldots,\kappa\}$.  Since $(A+BK', Bv)$ is controllable, this means that $(A+BK'+ BvG, Bv)$ is controllable.  (This uses the well known fact that state-feedback does not affect controllability.)  As a result, we have
\begin{equation}
\mathfrak{C}' = \begin{bmatrix} Bv & (A+BK)Bv & \ldots & (A+BK)^{p-1}Bv\end{bmatrix}
\end{equation}
has $\rank(\mathfrak{C}') = p$.  But by assumption, there exists $v_i$ such that $B_iv_i = Bv$.  So if we define
\begin{equation}
\mathfrak{C} = \begin{bmatrix} B_i & (A+BK)B_i & \ldots & (A+BK)^{p-1}B_i\end{bmatrix},
\end{equation}
then we have that $\mathfrak{C}' = \mathfrak{C}\blkdiag(v_i, \ldots, v_i)$. Thus $p \geq \rank(\mathfrak{C}) \geq \rank(\mathfrak{C}') = p$. Thus by Sylvester's rank inequality, we have $\rank(C_j\mathfrak{C}) \geq \rank(C_j) + \rank(\mathfrak{C}) - p = \rank(C)$. But $\rank(C_j) \geq 1$ since $C_j\neq 0$.  Combining this with the earlier inequality gives $\rank(C_j\mathfrak{C}) \geq 1$, and so $C_j\mathfrak{C} \neq 0$.  This means $k_{i,j}' \leq p-1$ exists since $C_j\mathfrak{C}$ is a block matrix consisting of the blocks $C_j(A+BK)^kB_i$.
\end{proof}

\section{Simulation: Autonomous Vehicle Platooning}

\label{sec:sav}

\begin{figure}
\includegraphics[trim={0in 0.1in 0in 0.1in},clip,scale=0.9]{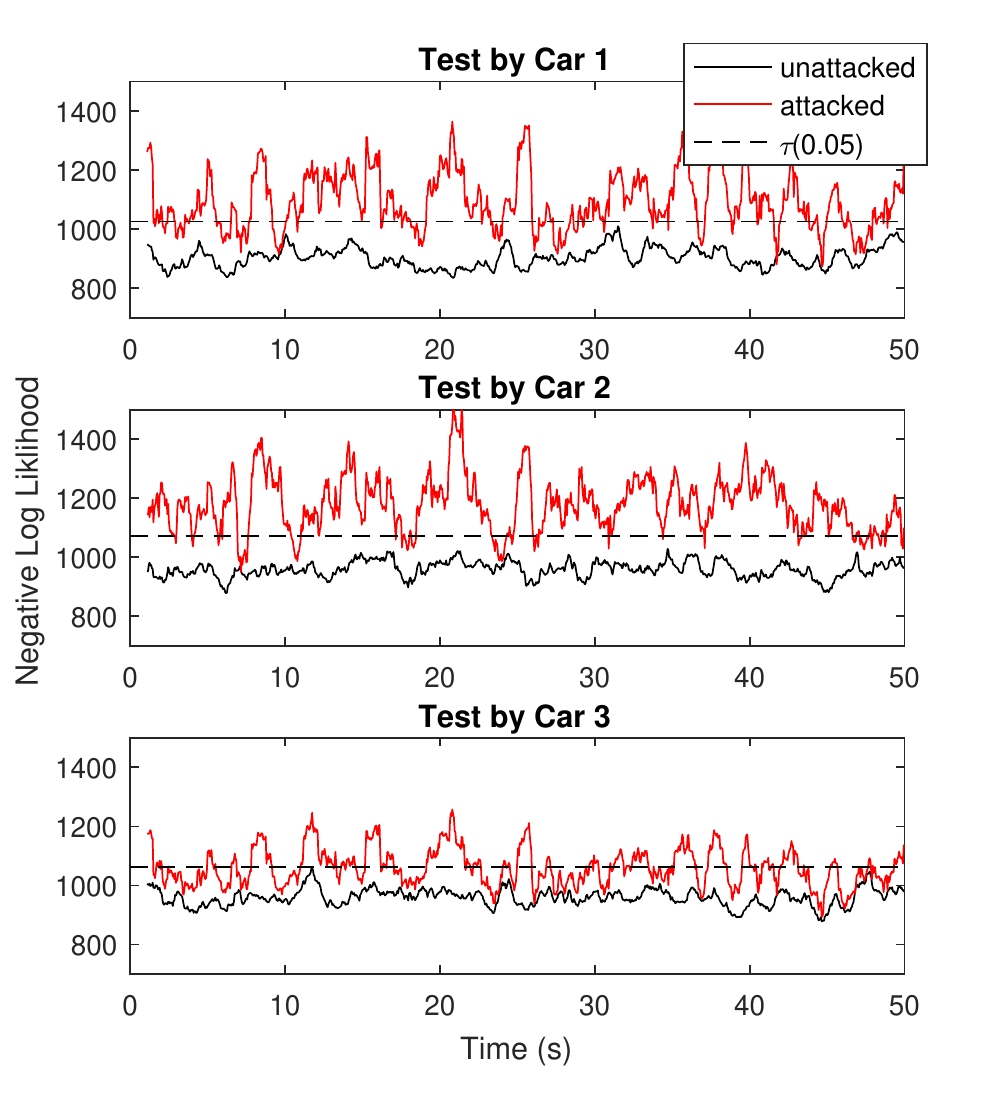}
\caption{Value of (\ref{eqn:nlltest}) for Simulation of Vehicle Platoon, with a Negative Log-Likelihood Threshold for $\alpha=0.05$ False Detection Error Rate\label{fig:tests}}
\end{figure}

We apply a standard model \cite{ulsoy2012automotive} for error kinematics of speed control of vehicles to generate a model for a three car platoon.  The state vector for the three car platoon is $x^\textsf{T} = \big[e_1\ d_1\ e_2\ d_2\ e_3\ d_3\big]$, where $e_i$ is deviation from the desired velocity for car $i$, and $d_i$ is deviation from the desired following distance between car $i$ and car $i+1$.  The discretized dynamics for a timestep of 0.05 seconds are then:
\begin{equation}
A = \begin{bmatrix} 1 & 0 & 0 & 0 & 0\\ -\frac{1}{20} & 1 & \frac{1}{20} & 0 & 0\\ 0 & 0 & 1 & 0 & 0\\ 0 & 0 & -\frac{1}{20} & 1 & \frac{1}{20} \\ 0 & 0 & 0 & 0 & 1\end{bmatrix}
\end{equation}
and
\begin{equation}
B_1 = \begin{bmatrix} \frac{1}{20} \\ -\frac{1}{800} \\ 0 \\ 0 \\ 0 \end{bmatrix} \quad B_2 = \begin{bmatrix} 0 \\ \frac{1}{800} \\ \frac{1}{20} \\ -\frac{1}{800} \\ 0\end{bmatrix} \quad B_3 = \begin{bmatrix} 0 \\ 0 \\ 0 \\ \frac{1}{800} \\ \frac{1}{20}\end{bmatrix}.
\end{equation}
Assuming each car measures its own velocity and the distance to the car in front of it, we have
\begin{equation}
\begin{aligned}
&C_1 = \begin{bmatrix} 1 & 0 & 0 & 0 & 0\end{bmatrix}\\
&C_2 = \begin{bmatrix} 0 & 1 & 0 & 0 & 0 \\ 0 & 0 & 1 & 0 & 0\end{bmatrix}\\
&C_2 = \begin{bmatrix} 0 & 0 & 0 & 1 & 0 \\ 0 & 0 & 0 & 0 & 1\end{bmatrix}
\end{aligned}
\end{equation}
We assume that the process and measurement noise had variance $\Sigma_W = 5\times 10^{-5}\cdot\mathbb{I}$ and $\Sigma_{Z,I} = 10^{-3}\cdot\mathbb{I}$, respectively.

We applied our statistical test (\ref{eqn:nlltest}) with each car using a watermarking signal with variance $\Sigma_{E,I} = 0.2$, where
\begin{equation}
\begin{aligned}
&K_1 = \begin{bmatrix} -1 & 0.1 & 0 & 0 & 0\end{bmatrix}\\
&K_2 = \begin{bmatrix} 1 & -1 & -2 & 0.1 & 0\end{bmatrix}\\
&K_3 = \begin{bmatrix} 0.5 & -0.5 & 0.5 & -1 & -2\end{bmatrix}
\end{aligned}
\end{equation}
and
\begin{equation}
L_1 = \begin{bmatrix} -0.5 \\ 0 \\ 0 \\ 0 \\ 0\end{bmatrix}\,  L_2 = \begin{bmatrix} 0.05 & 0 \\ -0.5 & 0 \\ 0 & -0.5\\ 0 & 0\\ 0& 0\end{bmatrix}\,  L_3 = \begin{bmatrix} 0 & 0 \\ 0 & 0 \\ 0.05 & 0 \\ -0.5 & 0 \\ 0 & -0.5\end{bmatrix}
\end{equation}
We conducted two simulations, where the platoon was un-attacked and attacked.  In the attack simulations, the attacker chose $v_{1,n}$ and $\nu_{2,3,n}$ to be zero mean i.i.d. jointly Gaussian random variables with variance $0.5$ and $0.2\cdot\mathbb{I}$, respectively. Fig. \ref{fig:tests} shows the results of applying our statistical test (\ref{eqn:nlltest}), and the plots show that our statistical watermarking test can detect the presence or absence of an attack.

\section{Conclusion}

This paper developed statistical watermarking to detect sensor and communication attacks on networked LTI systems. Unlike the non-networked case, watermarking in the networked case requires a $K$ so that $A+BK$ is controllable with respect to each $B_i$.  This ensures the private watermarking signal of each subcontroller is seen in the output of all subcontrollers.  We provided two algorithms to compute such a controller.  The efficacy of our watermarking approach was demonstrated by a simulation of a three car platoon.  One possible direction for future work is to explore the performance of our watermarking test under specific types of attacks, such as replay attacks or network disturbances.






\bibliographystyle{IEEEtran}
\bibliography{IEEEabrv,waten}

\end{document}